\documentclass[12pt]{amsart}
\usepackage{a4wide}

\usepackage{amssymb,amsfonts,amsmath,amsthm}
\usepackage{latexsym}

\usepackage[latin1]{inputenc}
\usepackage{graphicx,epsf}

\usepackage[colorlinks]{hyperref}
\bibliographystyle{apj_hyperref}

\newcommand{\adsurl}[1]{\href{#1}{ADS}}
\providecommand{\url}[1]{\href{#1}{#1}}

\usepackage{color}

\newtheorem{thm}{Theorem}[section]

\newtheorem{rem}[thm]{Remark}
\newtheorem{exmp}[thm]{Example}

\newcommand{\setC}{{\mathord{\mathbb C}}}

\newcommand{\setR}{{\mathord{\mathbb R}}}

\newcommand{\mymarginpar}[1]{}

\newcommand{\C}{{\mathord{\mathcal C}}}

\def\supp{\mathop{\mathrm{supp}}}

\def\part{\mathop{\partial}}

\numberwithin{equation}{section}

\begin{document}
\title[ Asymptotic Properties of Unbounded Quadrature Domains ]
 {Asymptotic Properties of Unbounded Quadrature Domains the 
Plane}

\author[L. Karp]{Lavi Karp*}

\address{%
Department of Mathematics\\ ORT Braude College\\
P.O. Box 78, 21982 Karmiel\\ Israel}

\email{karp@braude.ac.il}

\thanks{*Research  supported  ORT
Braude College's Research Authority}

\subjclass[2010]{Primary 31A35, 30C20; Secondary 35R35}
 
 \keywords{Quadrature domains, asymptotic curve, Cauchy transform, contact 
surfaces, null quadrature domains, conformal mapping, free boundaries}

\begin{abstract}
We prove that if $\Omega$ is a simply connected quadrature domain of a 
distribution with compact support and the infinity point belongs  the 
boundary, then the boundary has an asymptotic  curve that is  a straight 
line or a parabola or an infinite ray. In other words, such quadrature 
domains in the plane are perturbations of null quadrature domains.
\end{abstract}

\maketitle

\section{Introduction}

A domain $\Omega$ in the complex plane $ \setC$ is called  a \textit{quadrature 
domain} (QD) of a measure $\mu$ (or a distribution) and for the class of 
analytic 
functions, if $\mu _{\mid_{\setC \setminus \Omega}} = 0$ and
\begin{equation}
\label{eq:q-i}
 \int_\Omega f dA=   \mu(f) \quad \text{for all}\ f\in
AL^1(\Omega).
\end{equation} 
Here $AL^1(\Omega)$ is the space of all analytic and integrable functions in
$\Omega$ and $dA$  the area measure. We may also consider quadrature domains for the class of harmonic functions. In that case the space  $AL^1(\Omega)$ is replaced by $HL^1(\Omega)$, the set of all harmonic and integrable functions in $\Omega$,  and the measure $\mu$ is real. Any QD for the class of harmonic functions is also a QD for the class of analytic functions.  

 A particular class of 
unbounded QDs is the family  of \textit{null quadrature domains}, that is, 
domains $\Omega$ for which
\begin{equation*}
 \int_\Omega f dA=0, \quad\text{for all}\ f\in AL^1(\Omega).
\end{equation*} 
This class comprises  half-planes, the exteriors of parabolas, ellipses and
strips, and the complement of any set  which contains  more than three points 
and lies in a straight line \cite{Sakai_81}.

The main result of the present  paper asserts that if $\Omega$ is a simply 
connected QD of a measure with compact support and with an unbounded boundary, 
then $\Omega$ is asymptotically like a null QD. More precisely, the boundary of 
$\Omega$, $\partial\Omega$, has an asymptotic curve that 
is either a straight line or a parabola or an infinite ray.  

In terms of free boundary problems this result has the following 
interpretation. Let $\chi_\Omega$ denotes the characteristic set of $\Omega$ and assume there is a solution to the overdetermined problem 
\begin{equation*}
 \left\{\begin{array}{ll} \Delta u=\chi_\Omega-g \  \ \text{in}\ & \setR^2\\
 u=|\nabla u|=0 \  \  \text{on}\ & \setR^2\setminus \Omega,
\end{array}\right.
\end{equation*} 
where $g$ has a compact support in $\Omega$, and that the boundary of $\Omega$
is unbounded. Then $\partial\Omega$ has an asymptotic curve as is described above. 

Unbounded QDs in the two dimensional plane were studied by Sakai
\cite{Sakai_82, Sakai_93}, Shapiro \cite{Shapiro_87}, and recently by Lee and Makarov \cite{Lee_Makarov_13}.   Sakai showed   that for a given non--negative measure with compact support there is an
unbounded QD that contains a given null QD \cite[Ch. 11]{Sakai_82}. He used 
variational methods that are  also available in higher dimensions. Shapiro 
proposed the use of an inversion in order to characterize  unbounded
quadrature domains  \cite{Shapiro_87}. This idea was accomplished by Sakai in  \cite{Sakai_93}.  However, the asymptotic behavior of the boundary were not considered in those papers.

Our results are inspired by the study of \textit{contact surfaces}, and in 
particular by the works of Strakhov \cite{Strakhov_74, Strakhov_74_2}, since in 
those works the asymptotic line appears naturally. These problems arise in 
geophysics and in the two dimensional plane they have the following 
formulation. Assume that an unbounded Jordan curve $\Gamma$ separates an 
infinite strip into two domains with different constant 
densities. Assume  also  that the strip is parallel to the $x$--axis. 
Strakhov showed that if $\Gamma$ has an asymptotic line ${\rm Im}(z)=h$ ($z=x+iy$), then the gravitational fields can be computed by the Cauchy integral 
\begin{equation}
\label{eq:Cauchy-integral}
 \int_\Gamma \sigma\frac{\bar w- w+2ih}{w-z}dw,
\end{equation} 
where $\sigma $ is the difference between the two densities and $z$ lies above the strip.
The question is whether the shape of $\Gamma$  can be determined by the Cauchy integral when ${\rm Im}(z)$ is large.   
  
  This type of problem is embedded in the frame of  \textit{inverse problems in 
  potential theory},  in which one aims to determinate the shape of a body from 
  the measurements
  of its Newtonian potential far away from the body itself 
  \cite{Gardiner_Sjodin_08, Isakov_93, Ivanov_56_2, Novikov_1938, Zalcman_87}. 
  Quadrature domains are closely related to these problems 
  \cite{Gustafsson_90,  margulis_95}, for example, in the two dimensional plane 
  the Schwarz function has a significant role in all these types of problems 
  \cite{Aharonov_Shapiro_76, Davis_74, shapiro_92, tsirulskiy_63}. 
 
 An essential tool of the proof is the relation between the conformal mapping 
 in lower half--plane and the Cauchy transform of the measure $\mu$ of the QD 
 (formula (\ref{eq:2.21})).  Conformal mappings have been frequently  
 incorporated in all these types of problems.
 The main feature of these sorts of results is to asserts that a conformal map 
 from the unit disk to a domain $\Omega$ is rational if and only if $\Omega$ is 
 a QD of a  combination of Dirac measures and their derivatives 
 \cite{Aharonov_Shapiro_76, Davis_74, Gustafsson_83}, or equivalently, the 
 complex derivative of the  external logarithmic potential of $\chi_\Omega$ is 
 a rational function \cite{Ivanov_56_2, Strakhov_74_2, tsirulskiy_63}.

 Strakhov used the specific form of the conformal mapping in the lower
 half-plane (see (\ref{eq:6.12}) and (\ref{eq:2.14})) in order to show that 
 contact surfaces are highly non-unique  \cite{Strakhov_74_2}. Moreover, he 
 constructed a  continuous family of third order algebraic curves such that the 
 Cauchy integral (\ref{eq:Cauchy-integral})   has the same value for all the 
 curves in the family when  ${\rm Im}(z)$ is large. In the context of QDs,
 Strakhov's example provides an explicit family of unbounded domains, and each one of them is a QD of the same Dirac measure.
 Furthermore, the domains in the family  converge to a union of a disk and a 
 half--plane, in other words, a union of a disk and a null QD. We will extend 
 Strakhov's example to other types of null QDs, that is, we will construct 
 families of unbounded QDs of a fixed Dirac  measure, and such that their 
 boundary has a parabola, or an infinite ray, as an  asymptotic curve.

 We believe that the structure  of unbounded QDs in higher dimension is similar 
 to the two dimensional. However,   the  corresponding theory 
 in higher dimensions  is very restricted, and most of the problems are open. 
 For example, in \cite{karp_shahgholian_00} it is proved that if 
 $\Omega\subset\setR^n$ is an 
 unbounded QD of a measure with compact support and the complement of $\Omega$ 
 is not too thin at infinity, then inversion of the boundary is a $C^1$ surface 
 near the origin. However, this does not imply that the boundary has an 
 asymptotic plane. For further properties of unbounded QDs in higher dimensions 
 see \cite{karp_margulis_96, sakai_09}.

 The plan of the paper is the following: In the next section we shall first 
 establish the relation between rational conformal mappings in the lower 
 half--plane and quadrature identities. Having established that, the main 
 result follows easily.  Section \ref{sec:contact} deals with contact 
 surfaces. Although Strakhov's model is published in \cite{Strakhov_74}, we 
 shall present its basic idea here, and in particular,  we will emphasize its 
 connections to unbounded QDs. 
 In Section \ref{sec:example} we shall  construct examples of families of 
 unbounded QDs of a fixed Dirac measure, and in each example the boundary  has 
 a different type of an  asymptotic curve.

Throughout  this paper $f^\ast$
stands  for $\bar{f}(\bar z)$ and $\chi_\Omega$ is the characteristic function of a set $\Omega$.
We also may assume that $\Omega$ is the interior of its closure whenever it is
a QD (see cf. \cite[Corollary 2.15]{karp_margulis_96}).

\section{Conformal mappings and quadrature domains }

Conformal mappings have been used extensively in QDs and the inverse problem of 
potential theory. A fundamental property is the relations between the conformal 
mapping,  quadrature identity (\ref{eq:q-i}) and the Cauchy transform. To be 
more specific,  a conformal map 
from the 
unit disk to a domain $\Omega$ is rational if and only if $\Omega$ is a QD of a 
combination of Dirac measures and their derivatives \cite{Aharonov_Shapiro_76, 
Davis_74, Gustafsson_83}, or equivalently, the Cauchy transform  of the measure 
$\chi_\Omega$ is a rational function outside $\Omega$
\cite{Ivanov_56_2, tsirulskiy_63}.

Theorem \ref{thm:2} below comprises similar statements. 
It was proved in the context of contact surfaces under the assumption that the boundary has an asymptotic line 
\cite{fedorova_tsirulskiy, Strakhov_74_2}, and for QDs by Shapiro  under the 
assumption that the Schwarz function $S(z)$ tends to infinity as $z$ goes to 
infinity \cite{Shapiro_87}. Shapiro used that assumption  in order to apply an 
inversion and to  reduce the problem to the one of bounded QDs. This assumption 
was  affirmed later 
by Sakai in \cite{Sakai_93}. Since the proof of Theorem \ref{thm:2} is the core 
for the main result we prove it here. We also slightly extend these previous 
results,  and in addition,  our proof is based  upon the generalized Cauchy 
transform.

 Let $z$ and $\zeta$ be points in the complex plane. For a measure $\mu$, we denote by $\C^\mu$ the Cauchy transform of $\mu$, 
\begin{equation}
\label{eq:Cauchy transform}
 \C^\mu(z):=\dfrac{1}{\pi}\int\dfrac{d\mu(\zeta)}{\zeta-z}.
\end{equation}
 Whenever $\sigma\in L^\infty(\setC)$ and  $\mu=\sigma\chi_DdA$  we will denote the  Cauchy transform by  $C^{\sigma D}$.
 
 The Cauchy transform satisfies the differential 
equation $\partial_{\bar z}\mathcal{C}^\mu=-\mu$ in the distributional sense, 
and it  is well defined whenever $\mu$ has a compact support. But it may not 
converge when the measure has arbitrary support. In order to over come this we 
use the following device that was first  implemented by Bers \cite{Bers_65} and 
later by Sakai \cite{Sakai_81, Sakai_93}.

We modify the Cauchy kernel $(\pi(\zeta-z))^{-1}$ by
\begin{equation}
\label{eq:6.20}
 \mathcal{K}(\zeta,z,a,b)=\dfrac{1}{\pi}\left(\dfrac{1}{\zeta-z}+\dfrac{z-b}{
(b-a)(\zeta-a) }+\dfrac{z-a}{ (a-b)(\zeta-b) } \right), \quad a\neq b.
\end{equation} 
Since $\mathcal{K}(\zeta,z,a,b)=O(|\zeta|^{-3})$ for large $|\zeta|$, the
integral
\begin{equation}
\label{eq:6.21}
 \mathcal{C}^g_{\mathcal{K}}(z)=\int \mathcal{K}(\zeta,z,a,b)g(\zeta)dA(\zeta), 
\end{equation} 
converges for any $g\in L^\infty(\setC)$. The 
transformation (\ref{eq:6.21}) is called a {\it generalized Cauchy transform} 
of $g$.
Obviously if ${a,b}\not\in{\rm supp}(g)$, then
$\partial_{\bar{z}}\mathcal{C}^g_\mathcal{K}=-g$.

Let $\mathbb{H}_{\pm}$ denote the upper/lower half-plane respectively and
consider a conformal map $\psi$, from   $\mathbb{H}_{-}$ onto a domain $\Omega$
of, the form
\begin{equation}
\label{eq:6.12}
z= \psi(w)=q(w)+\varphi(w),
\end{equation} 
where $q$ is a quadratic polynomial,  
\begin{equation}
\label{eq:2.14}
\varphi(w) = \sum_{k=1}^m\sum_{j=0}^{m_k}\dfrac{a_{kj}}{(w-b_k)^{j+1}}
+\sum_{k=1}^n
c_k\int_{d_k}^{d_{k+1}} \dfrac{1}{w-s}ds,
\{b_1,\ldots,b_m, d_1,\ldots,d_{n+1}\}\subset \mathbb{H}_+
\end{equation} 
and $\{a_{kj}\}$, $\{c_k\}$ are complex numbers.
The  integrals are along piecewise  straight lines connecting $d_k$ to 
$d_{k+1}$ and do not pass through the points $b_k$. We assume that the points 
$\{b_k\}_{k=1}^m, \{d_k\}_{k=1}^{n+1}$ are distinct.

For  given points $\{\beta_1,\ldots,\beta_m\}$ and 
$\{\delta_1,\ldots,\delta_{n+1}\}$, we define a distribution $T$ as follows:
\begin{equation}
\label{eq:6.14}
T\left(\phi\right)= \sum_{k=1}^m\sum_{j=0}^{m_k}
\alpha_{kj}\partial_z^j\phi(\beta_k) +\sum_{k=1}^n
\bar{c}_k
\int_{\delta_k}^{\delta_{k+1}}\phi(s)ds,
\end{equation}
where the integrals are along piecewise straight lines  connecting $\delta_k$ 
to $\delta_{k+1}$, $\alpha_{kj}$ and $c_k$ are constants. The test function 
$\phi$ belongs to  $C^\infty(D)$, for any open set $D$ such that $\supp(T)\subset D$.

\begin{thm}
\label{thm:2}
Let $\Omega$ be a simply connected domain with an unbounded boundary
$\partial\Omega$. Then the following are equivalent:
\begin{enumerate}
 \item[(a)] The domain $\Omega$ is QD of a distribution $T$ of the form
(\ref{eq:6.14}), that is,
\begin{equation}
\label{eq:q_i:2}
\int_\Omega f dA=  T(f)
\qquad \text{for all} \ f\in AL^1(\Omega) \tag{\ref{eq:q-i}}.
\end{equation}

\item[(b)]
There is a conformal map $\psi$ from $\mathbb{H}_-$ onto $\Omega$ of the 
form (\ref{eq:6.12})--(\ref{eq:2.14}).

\end{enumerate}
\end{thm}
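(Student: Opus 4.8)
The plan is to make the generalized Cauchy transform $\mathcal{C}^{\chi_\Omega}_{\mathcal{K}}$ the bridge between the two conditions, as formula (\ref{eq:2.21}) suggests, and to run the argument in both directions around it. The starting observation is that, with $a,b\notin\overline{\Omega}$, the function
\begin{equation*}
 g(z):=\bar z+\mathcal{C}^{\chi_\Omega}_{\mathcal{K}}(z)
\end{equation*}
is analytic in $\Omega$, since there $\partial_{\bar z}\bar z=1$ and $\partial_{\bar z}\mathcal{C}^{\chi_\Omega}_{\mathcal{K}}=-\chi_\Omega$, whereas $\mathcal{C}^{\chi_\Omega}_{\mathcal{K}}$ is itself analytic off $\overline{\Omega}$. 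Composing with the conformal map $\psi\colon\mathbb{H}_-\to\Omega$ and using $\overline{\psi(w)}=\psi^\ast(w)$ on the real axis, one reads off on $\partial\mathbb{H}_-$ the identity
\begin{equation*}
 \psi^\ast(w)=g(\psi(w))-\mathcal{C}^{\chi_\Omega}_{\mathcal{K}}(\psi(w)),
\end{equation*}
which is the content of (\ref{eq:2.21}): the left-hand side is a priori analytic only in $\mathbb{H}_+$, while the right-hand side continues it across $\setR$ into $\mathbb{H}_-$, its singularities there being inherited from those of $\mathcal{C}^{\chi_\Omega}_{\mathcal{K}}\circ\psi$. This single relation links the analytic structure of $\psi$ to the Cauchy transform of $\chi_\Omega$, hence to the quadrature data.

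For the implication $(b)\Rightarrow(a)$ I would take $\psi$ of the form (\ref{eq:6.12})--(\ref{eq:2.14}) and verify the quadrature identity directly. Writing $\int_\Omega f\,dA=\int_{\mathbb{H}_-}f(\psi(w))\,|\psi'(w)|^2\,dA(w)$ and applying the complex Green's formula converts the area integral into a contour integral over $\setR$ of $\psi^\ast(w)\,\psi'(w)\,f(\psi(w))$. Since $\varphi$ places its poles at the points $b_k\in\mathbb{H}_+$, the reflection $\psi^\ast$ carries poles at $\bar b_k\in\mathbb{H}_-$ together with Cauchy integrals over the reflected segments; pushing the contour downward and applying the residue theorem in $\mathbb{H}_-$ produces exactly the point-derivative terms $\alpha_{kj}\partial_z^j f(\beta_k)$ and the segment integrals of (\ref{eq:6.14}). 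This is essentially a bookkeeping of residues and is the routine direction.

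The substantive direction is $(a)\Rightarrow(b)$. Because $\Omega$ is simply connected with unbounded boundary and the infinity point lies on $\partial\Omega$, the Riemann mapping theorem supplies a conformal $\psi\colon\mathbb{H}_-\to\Omega$ with $\psi(\infty)=\infty$, and the task is to show $\psi$ has the prescribed form. Feeding the kernel $\mathcal{K}(\,\cdot\,,z,a,b)$ into the quadrature identity as test function evaluates $\mathcal{C}^{\chi_\Omega}_{\mathcal{K}}$ off $\Omega$ as the generalized Cauchy transform of $T$, an explicit function whose only singularities are poles at the $\beta_k$ and logarithmic branch cuts along the segments $[\delta_k,\delta_{k+1}]$. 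Transporting this through the displayed identity shows that $\psi^\ast$ continues from $\mathbb{H}_+$ into $\mathbb{H}_-$ holomorphically except for poles at the preimages $\bar b_k:=\psi^{-1}(\beta_k)$ and Cauchy-type integrals over the preimages of the segments; reflecting, $\psi$ acquires exactly the singular part $\varphi$ of (\ref{eq:2.14}). It remains to identify the entire part, and here the growth at infinity is decisive: since $T$ is effectively compactly supported its Cauchy transform decays, so the only unbounded contribution comes from the correction terms of the kernel $\mathcal{K}$, which are linear in $z$, and tracking these forces $\psi-\varphi$ to be a polynomial of degree at most two, i.e. the quadratic $q$.

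The main obstacle is this last step — pinning down the entire part and in particular the exact quadratic growth at infinity. Because $\infty\in\partial\Omega$, one cannot follow Shapiro's route of inverting to a bounded QD, and the ordinary Cauchy transform does not converge; the whole point of the modified kernel (\ref{eq:6.20}) and its $O(|\zeta|^{-3})$ decay is to make both the continuation argument and the infinity analysis legitimate, so carrying the auxiliary points $a,b$ consistently through the residue computation and the growth estimate is the delicate part. The logarithmic branch cuts from the segment terms add a further wrinkle, since $\psi^\ast$ is then only holomorphic on a slit half-plane and the reflection argument must be carried out there rather than on all of $\mathbb{H}_-$.
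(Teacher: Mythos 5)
Your direction (b)$\Rightarrow$(a) is essentially the paper's argument: Green's theorem turns the area integral into a contour integral of $\psi^\ast\psi' f(\psi)$ over $\setR$, and residues at the reflected poles $\bar b_k$ plus the reflected segment integrals produce the distribution (\ref{eq:6.14}). One detail you elide but do need: Green's theorem on $\psi(\mathbb{H}_-\cap B_r)$ only yields the line integral over all of $\setR$ after the arc contribution at $|w|=r$ is killed, which requires $f$ to have polynomial decay; the paper first invokes the density in $AL^1(\Omega)$ of holomorphic functions that are $O(|z|^{-k})$ at infinity before running the contour argument. Without that reduction the limit step fails for general $f\in AL^1(\Omega)$.

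In the substantive direction (a)$\Rightarrow$(b) there is a genuine gap, located exactly where you flag ``the delicate part.'' Your plan replaces the paper's treatment of the point at infinity by ``tracking the linear correction terms of the kernel $\mathcal{K}$,'' and this does not work, for two reasons. First, before any growth bookkeeping you must know that $F$ (equal to $S(\psi(w))$ in $\mathbb{H}_-$ and $\psi^\ast(w)$ in $\mathbb{H}_+$) continues analytically across $\setR$ and is holomorphic in a full punctured neighborhood of $w=\infty$; the continuation at finite $t\in\setR$ already uses Sakai's piecewise analyticity of $\partial\Omega$ \cite{Sakai_91}, and near $w=\infty$, where $\psi(t)$ approaches the boundary point $\infty\in\partial\Omega$, nothing rules out accumulating boundary singularities or an essential singularity of $F$. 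The paper resolves precisely this by the inversion you explicitly reject: it sets $\Psi(\zeta)=1/\psi(-\zeta^{-1})$ and invokes Sakai's regularity for unbounded QDs \cite[Corollary 2.6]{Sakai_93}, which affirms that $S_{\rm inv}(z)=1/S(z^{-1})$ is a genuine Schwarz function of $\partial\Omega_{\rm inv}$ near the origin (Shapiro's route is blocked only in the sense that the inverted domain is not a bounded QD; the inversion of the Schwarz function and of the map is still the key tool). Second, even granting that $F$ is meromorphic on the sphere, the linearity of the kernel corrections gives no bound on the degree $N$ of $q$: the linear term $L(z)$ in (\ref{eq:6.22}), composed with $\psi$, grows like $|w|^N$ for \emph{any} $N$, and on $\setR$ the identity $F(t)=\overline{\psi(t)}=q^\ast(t)+\varphi^\ast(t)$ is self-consistent for every degree, so no contradiction can be extracted from growth alone. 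The paper obtains $N\le 2$ from the local behavior $\Psi(\zeta)\sim\kappa\zeta^N$ at the origin together with Sakai's trichotomy for $\partial\Omega_{\rm inv}$ there (smooth analytic arc, two tangential arcs, or cusp), which forces $N=1$ or $N=2$. (Univalence of $\Psi$ on a half-disk at $0$ is the geometric mechanism: $\zeta^N$ on a half-disk is injective only for $N\le 2$.) You also skip the single-valuedness step, $\sum\gamma_k=0$, needed to convert the logarithmic terms of (\ref{eq:6.24}) into the segment integrals of (\ref{eq:2.14}); that is the content of Remark \ref{remark:1} and cannot be bypassed, since a priori the continuation only produces logarithms at the points $\bar d_k$, not Cauchy integrals over the connecting segments.
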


\begin{rem}
The relations between the coefficients of the conformal mapping $\psi$ and the 
distribution $T$ are as follows: $\beta_k=\psi(\bar{b}_k)$, 
$\delta_k=\psi(\bar{d}_k)$ and  $\{\alpha_{kj}\}$ are depended on $\{a_{kj}\}$ 
through equation  (\ref{eq:2.9}) below.
\end{rem}

\noindent
\textit{Proof}( of Theorem \ref{thm:2}). 
Suppose first that $\Omega=\psi(\mathbb{H}_-)$, where $\psi$ is a conformal mapping of the form (\ref{eq:6.12})--(\ref{eq:2.14}).  
Since  the class consisting  of holomorphic functions $f$ in  a neighborhood of  $\Omega$ such that $ f(z)=O(|z|^{-k})$ at infinity is dense in $AL^1(\Omega)$ for any integer $k\geq 3$ (see 
\cite{hayman_karp_shapiro, Shapiro_87}), 
we may  prove the identity (\ref{eq:6.14}) for $f$ that is holomorphic in a neighborhood  of $\Omega$ and has
arbitrary polynomial decay at infinity.

Let $B_r$ be a ball with radius 
$r$ and center at the origin, and let $\Omega_r$ be the image of 
$\mathbb{H}_-\cap B_r$ under the map $\psi$. Then by Green's theorem,
\begin{equation*}
\begin{split}
   \int\limits_{\Omega_r} f dA &=\frac{1}{2i} \int\limits_{\partial\Omega_r} 
\bar zf(z)dz=\frac{1}{2i}\int\limits_{-r}^r 
\overline{\psi(t)}f(\psi(t))\psi'(t)dt 
 \\ & + \frac{1}{2i}\int\limits_{\{|w|=r, {\rm Im}(w)<0\}} 
\overline{\psi(w)}f(\psi(w))\psi'(w)dw.
\end{split}
\end{equation*}
Noting that $\psi$ has at most a quadratic growth at infinity and $f$ has arbitrary 
polynomial decay, the second integral of the right hand side tends to zero as 
$r$ goes to infinity. Hence
\begin{equation}
\label{eq:2.7}
   \int\limits_{\Omega} f dA =\frac{1}{2i}\int\limits_{-\infty}^\infty
\overline{\psi(t)}f(\psi(t))\psi'(t)dt.
\end{equation}

Let $\overline{\psi(\bar w)}=\psi^\ast(w)$. Since 
$\psi^\ast(t)=\overline{\psi}(t)$ for $t\in\setR$, we may replace 
$\overline{\psi}$ by $\psi^\ast$ in (\ref{eq:2.7}). Note 
$\psi^\ast=q^\ast+\varphi^\ast$ is holomorphic outside  the reflection of the 
singularities of  $\varphi$,  and since $f$ has  polynomial decay at infinity 
we have that  
\begin{equation*}
 \lim_{r\to \infty}\int\limits_{\{|w|=r, {\rm Im}(w)<0\}} 
{\psi^\ast(w)}f(\psi(w))\psi'(w)dw=0.
\end{equation*}
 Hence, we can replace the line 
integral in (\ref{eq:2.7})  by several line integrals 
around the singularities of $\varphi^*$ in $\mathbb{H}_-$.
Let 
$C_\rho(\bar{b}_k)$ be a small circle around $\bar{b}_k$, then 
\begin{equation}
\label{eq:2.9}
\begin{split}
 & \dfrac{1}{2i} \int\limits_{C_\rho(\bar{b}_k)}
\left(q^*(w)+\varphi^*(w)\right)f(\psi(w))\psi'(w) dw \\  = &
\dfrac{1}{2i}\sum_{j=0}^{m_k} \int\limits_{C_\rho(\bar{b}_k)}\left(
\dfrac{\bar{a}_{kj}}{(w-\bar{b}_k)^{j+1}}\right)f(\psi(w))\psi'(w) dw \\ = &
\pi \sum_{j=0}^{m_k} \bar{a}_{kj}{\rm
Res}\left(\dfrac{(f \circ\psi)\psi'}{(w-\bar{b}_{k})^{j+1}}, \bar{b}_k\right)
=: 
\sum_{j=0}^{m_k} \alpha_{kj}\partial_z^{j}f(\beta_k),
\end{split}
\end{equation}
where $\beta_k=\psi(\bar{b}_k)$.
 
Let $\gamma$ be a closed curve in $\mathbb{H}_-$ around the polygonal curve
connecting $\bar{d}_1$ to $\bar{d}_{n+1}$, and such that it does not 
surround any of the points $\bar{b}_i, i=1,\ldots,m$. Then 
\begin{equation*}
\begin{split}
 & \dfrac{1}{2i} \int\limits_{\gamma}
\left(q^*(w)+\varphi^*(w)\right)f(\psi(w))\psi'(w) dw \\  = & \dfrac{1}{2i}
\int\limits_{\gamma}\left(\sum_{k=1}^n
\bar{c}_k\int_{\bar{d}_k}^{\bar{d}_{k+1}}\dfrac{1}{w-s
}ds\right)f(\psi(w))\psi'(w) dw\\
= & \dfrac{1}{2i}
\sum_{k=1}^n
\bar{c}_k\int_{\bar{d}_k}^{\bar{d}_{k+1}}\left(\int\limits_{\gamma}\dfrac{1}{w-s
}f(\psi(w))\psi'(w) dw\right)ds\\
= &\pi\sum_{k=1}^n
\bar{c}_k\int_{\bar{d}_k}^{\bar{d}_{k+1}}f(\psi(s))\psi'(s)  ds
= \pi\sum_{k=1}^n
\bar{c}_k\int_{\delta_k}^{\delta_{k+1}}f(\tau)  d\tau,
\end{split}
\end{equation*} 
where $\delta_k=\psi(\bar{d}_k)$.
Summing all the integrals around the singularities of $\varphi^\ast$ in
$\mathbb{H}_-$, we obtain  the quadrature identity (\ref{eq:q_i:2}) with $T$ 
as in (\ref{eq:6.14}). 

For the  converse assertion  we follow Aharonov and Shapiro method 
\cite{Aharonov_Shapiro_76} with some obvious modifications, and we shall also
use Sakai's regularity results \cite{Sakai_91, Sakai_93} to overcome the 
difficulty near the infinity point.

Calculating the generalized Cauchy transform of the distribution $T$, we find 
that
\begin{equation}
\label{eq:6.22}
 \mathcal{C}_{\mathcal{K}}^T(z)=\dfrac{1}{\pi}\sum_{k=1}^m\sum_{j=0}^{m_k}
\frac{(-1)^jj!\alpha_{kj}}{\left(z-\beta_k\right)^{j+1}}+\dfrac{1}{\pi}\sum_{k=1
} ^n
\bar{c}_k\int_{\delta_k}^{\delta_{k+1}}\dfrac{1}{s-z}ds+L(z),
\end{equation}
where $L(z)$ is a linear function.  Let $a,b\not\in\Omega$.
Then for any $z\not\in \Omega$ the modified Cauchy kernel  
$\mathcal{K}(\cdot,z,a,b)$ belongs to $ AL^1(\Omega)$. Hence, the quadrature
identity (\ref{eq:q-i}) implies that
$\mathcal{C}_{\mathcal{K}}^{\Omega}(z)=\mathcal{C}_{ 
\mathcal{K} } ^T(z)$ for $z\not\in\Omega$. Since  $\mathcal{C}_\mathcal{K}^T(z)$
is analytic for $z$ 
outside  the support of $T$, and $\supp(T)\Subset\Omega$, 
\begin{equation*}
 S(z)=\bar{z}+\mathcal{C}_{\mathcal{K}}^{\Omega}(z)-\mathcal{C}_{\mathcal{K}}
^T(z)
\end{equation*} 
is the  Schwarz function of the boundary $\partial\Omega$ in $\Omega$. That is, 
$S(z)=\bar z$ on $\partial\Omega$ and it is holomorphic for $z\in \Omega$ 
near    $\partial\Omega$.

Let now $\psi$ be a conformal map from $\mathbb{H}_-$ to $\Omega$ and set 
\begin{equation}
\label{eq:6.23}
 F(w)=\left\{\begin{array}{ll} S(\psi(w)), \ &w\in \mathbb{H}_-\\
\psi^*(w), & w\in \mathbb{H}_+ \end{array}\right..
\end{equation} 
Since  $S(\psi(t))=\overline{\psi(t)}=\psi^\ast(t) $ for $t\in \setR$, the function $F$ is 
analytically continuable to a single-valued function across the real line (here
we 
rely of Sakai's regularity result that implies that the boundary is a union of 
piecewise analytic arcs \cite{Sakai_91}). The crucial point is the analytic 
continuation near infinity. In order to overcome this difficulty we use an 
inversion,   we set $\Omega_{\rm inv}=\{z: 1/{z}\in\Omega\}$ and 
$\partial\Omega_{\rm inv}=\{z:1/z\in\partial\Omega\}$. Then  it follows from 
Sakai's regularity result \cite[Corollary 2.6]{Sakai_93}, that
\begin{equation*}
 S_{\rm inv}(z):=\left\{\begin{array}{ll}\frac{1}{S({z}^{-1})}, \ & z\neq0\\
0,\ & z=0\end{array}\right.
\end{equation*} 
is the Schwarz function of $\partial\Omega_{\rm inv}$.
Let $\Psi(\zeta):= 1/\psi(-\zeta^{-1})$ and set
\begin{equation*}
\widetilde{F}(\zeta)=\left\{\begin{array}{ll} S_{\rm inv}(\Psi(\zeta)), \
&\zeta\in
\mathbb{H}_-\\ \Psi^*(\zeta), & \zeta\in \mathbb{H}_+ \end{array}\right..
\end{equation*} 
Then the  function $\Psi(\zeta)$ maps  $\mathbb{H}_-$ conformally onto 
$\Omega_{\rm inv}$ and   $ S_{\rm inv}(\Psi(t))=\overline{\Psi(t)}$ for 
$t$ in  $ \setR$. Hence $\widetilde{F}$ has an analytic continuation across 
the real line, and  in particular,  in a neighborhood of the origin. Thus  $F$ in 
(\ref{eq:6.23}) is holomorphic in a neighborhood of infinity. 

 Let $\{\bar{b}_1,\ldots,\bar{b}_m, \bar{d}_1,\ldots,\bar{d}_{n+1}\}\subset 
\mathbb{H}_-$ be the pre-image of $\{\beta_1,\ldots,\beta_m\}$ and 
$\{\delta_1,\ldots,\delta_{n+1}\}$ under $\psi$.  From (\ref{eq:6.22}) and 
(\ref{eq:6.23}) we see that $F'(w)$ is a single-valued meromorphic  function in 
the Riemann sphere  with poles of order higher than or equal to two at the 
points  $\{\bar{b}_1,\ldots,\bar{b}_m\}$, and of order one at the points 
$\{\bar{d}_1,\ldots,\bar{d}_{n+1}\}$. Since $\psi$ is univalent in 
$\mathbb{H}_-$,   $F$ has  polynomial growth at infinity, and hence $F$ is a
meromorphic function in the entire plane  of the form
\begin{equation}
\label{eq:6.24}
 F(w)=q^\ast(w)+\sum_{k=1}^n\sum_{j=0}^{m_k}
\dfrac{\bar{a}_{kj}}{(w-\bar{b}_k)^{j+1}}+\sum_{k=1}^{n+1}
\gamma_k\log(w-\bar{d}_k),
\end{equation} 
where $q$ is a polynomial and $a_{kj},\gamma_k$ are constants. 
From (\ref{eq:6.23}) we see that $F$ is a
single--valued function and hence $\sum_{k=1}^{n+1}
\gamma_k\log(w-\bar{d}_k)$  is also a single-valued. This implies that
$\sum_{k=1}^{n+1}\gamma_k=0$, and
\begin{equation}
\label{eq:2.13}
 \sum_{k=1}^{n+1}
\gamma_k\log(w-\bar{d}_k)=\sum_{k=1}^{n}
\bar{c}_k\int_{\bar{d}_k}^{\bar{d}_{k+1}}\dfrac{1}{w-s}ds
\end{equation} 
for some constants $\{c_k\}$ (see Remark \ref{remark:1} below). Setting
\begin{equation*}
 \varphi^*(w)=\sum_{k=1}^m\sum_{j=0}^{m_k}
\dfrac{\bar{a}_{kj}}{(w-\bar{b}_k)^{j+1}}+\sum_{k=1}^{n}
\bar{c}_k\int_{\bar{d}_k}^{\bar{d}_{k+1}}\dfrac{1}{w-s}ds,
\end{equation*}
 we see  from (\ref{eq:6.23}), (\ref{eq:6.24}) and
(\ref{eq:2.13})  that
\begin{equation*}
 \psi(w)=q(w)+\varphi(w),
\end{equation*} 
where $\varphi$ is given by (\ref{eq:2.14}).

It remains to prove that  the degree of $q$ does not exceed two. Let $N$ be the 
degree of $q$, then we may write $\psi(w)=Aw^Ng(w)$, where $g$ is 
holomorphic in $\mathbb{H}_-$ and $g(\infty)=1$. Therefore   $\Psi(\zeta):= 
1/\psi(-\zeta^{-1})={(-\zeta)^N}{\left(A g(-\zeta^{-1})\right)}^{-1}$. Now, 
according to Sakai's regularity result \cite{Sakai_91, Sakai_93}, there are 
three possibilities: Either $\partial\Omega_{\rm inv}$ is a smooth analytic 
curve near 
the origin, or $\partial\Omega_{\rm inv}$ is a union of two tangential  analytic 
arcs in a neighborhood of the origin,  or $\partial\Omega_{\rm inv}$ has cusp 
singularity at the origin. In the first and second cases, $\Psi(t)$ is a smooth 
curve for $t\in (-\epsilon,\epsilon)$ and  some positive $\epsilon$. Then 
clearly  $N=1$. In the third one $\Psi$ has a Taylor expansion  
$\kappa\left(\zeta^2+a_3 \zeta^3+\cdots\right)$,  for $\zeta\in\mathbb{H}_-\cap 
\{|\zeta|<\epsilon\}$ and $\kappa\in\setC$. Then   obviously $N=2$. So we 
conclude that 
\begin{equation}
\label{eq:2.15}
 q(w)=A_2w^2+A_1w+A_0
\end{equation}
for some complex numbers $A_2,A_1$ and $A_0$,  and this completes the proof. 
 \hfill{$\square$}

The boundary of $\Omega$, $\partial\Omega$, has a parametric representation
$\{\psi(t)=q(t)+\varphi(t): t\in\setR\}$. Since   $\lim_{t\to\pm\infty}\varphi(t)=0$,
$\partial\Omega$ has the asymptotic of the curve $\{q(t): t\in \setR\}$. 
Hence there are three possibilities of asymptotic curves: In case $A_2=0$ in (\ref{eq:2.15}), then
$\partial\Omega$ has an asymptotic of a straight line. Or else, we may write
\begin{equation*}
 q(w)=A_2\left(w^2+\frac{A_1}{A_2}w+\frac{A_0}{A_2}\right).
\end{equation*}
So if ${\rm Im}({A_1}/{A_2})\neq 0$, then $\partial\Omega$ has an asymptotic of
a parabola, otherwise it has an
asymptotic of a ray.

This phenomenon of the asymptotic behavior can be   extended for
 unbounded QDs  for any  distribution with compact support.

\begin{thm}
 Let $\Omega$ be a simply connected quadrature domain of a distribution $T$ 
with compact support in $\Omega$. If $\infty\in\partial\Omega$, then the 
boundary $\partial \Omega$   has  an asymptotic curve that is either a straight 
line or a parabola or an infinite ray. 
\end{thm}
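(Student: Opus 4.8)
The plan is to rerun the construction from the proof of Theorem~\ref{thm:2}, but to extract only the \emph{local} analytic structure of the conformal map at the infinity point, since for a general distribution the global rationality exploited there is no longer available. Because $\supp(T)\Subset\Omega$, the generalized Cauchy transform $\mathcal{C}^T_{\mathcal{K}}$ is holomorphic in a full neighborhood of $\partial\Omega$, and in particular near the infinity point. Choosing $a,b\notin\Omega$ so that $\mathcal{K}(\cdot,z,a,b)\in AL^1(\Omega)$, the quadrature identity yields $\mathcal{C}^\Omega_{\mathcal{K}}(z)=\mathcal{C}^T_{\mathcal{K}}(z)$ for $z\notin\Omega$ exactly as before, so that
\[
 S(z)=\bar z+\mathcal{C}^\Omega_{\mathcal{K}}(z)-\mathcal{C}^T_{\mathcal{K}}(z)
\]
is the Schwarz function of $\partial\Omega$, holomorphic in $\Omega$ near the boundary.

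Fixing a conformal map $\psi\colon\mathbb{H}_-\to\Omega$ and defining $F$ by (\ref{eq:6.23}), the Schwarz-reflection argument of Theorem~\ref{thm:2} shows that $F$ continues analytically across $\setR$ away from infinity (by Sakai's piecewise analyticity \cite{Sakai_91}), while the inversion device with $\Psi(\zeta)=1/\psi(-\zeta^{-1})$ and the inverted Schwarz function $S_{\rm inv}$ shows that $F$ is holomorphic in a neighborhood of infinity. Consequently $\psi$ itself extends holomorphically to a punctured neighborhood of $w=\infty$, with at worst a pole there. To bound the order of that pole I would write $\psi(w)=Aw^Ng(w)$ with $g$ holomorphic in $\mathbb{H}_-$ and $g(\infty)=1$, so that $\Psi(\zeta)=(-\zeta)^N\bigl(Ag(-\zeta^{-1})\bigr)^{-1}$. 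Sakai's regularity result \cite{Sakai_91,Sakai_93} leaves only three possibilities for $\partial\Omega_{\rm inv}$ near the origin---a smooth analytic curve, two tangential analytic arcs, or a cusp---yielding $N=1$ in the first two cases and $N=2$ in the cusp case, exactly as in (\ref{eq:2.15}).

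Thus $\psi(w)=q(w)+\varphi(w)$ with $\deg q\le 2$, where $q$ is the principal part of $\psi$ at infinity and $\varphi$, being holomorphic in $\mathbb{H}_-$ with $\varphi(w)\to0$ as $w\to\infty$, satisfies $\varphi(t)\to0$ as $t\to\pm\infty$. Since $\partial\Omega=\{\psi(t):t\in\setR\}$ and $\varphi(t)\to0$, the boundary is asymptotic to the curve $\{q(t):t\in\setR\}$. Writing $q(w)=A_2w^2+A_1w+A_0$, this is a straight line when $A_2=0$, a parabola when $A_2\neq0$ and ${\rm Im}(A_1/A_2)\neq0$, and an infinite ray otherwise, which is the assertion.

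The main obstacle is precisely that the global step of Theorem~\ref{thm:2}---that $F'$ is a globally meromorphic function of the form (\ref{eq:6.24})---fails here, since $\mathcal{C}^T_{\mathcal{K}}$ need not be rational and $\varphi$ may carry singularities unrelated to the form (\ref{eq:2.14}). The way around it is to observe that the asymptotic analysis is entirely local at infinity: the degree bound $N\le2$ and the decay of $\varphi$ depend only on the regularity of $\partial\Omega$ near the infinity point---equivalently of $\partial\Omega_{\rm inv}$ near the origin---which is guaranteed by $\supp(T)\Subset\Omega$. One never needs the global form of $\psi$, only the order of its pole at $w=\infty$ and the vanishing of $\varphi$ along the real axis.
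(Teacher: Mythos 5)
Your proposal is correct, and its skeleton coincides with the paper's: Schwarz function $S(z)=\bar z+\mathcal{C}^{\Omega}_{\mathcal{K}}(z)-\mathcal{C}^{T}_{\mathcal{K}}(z)$, the reflected function $F$ of (\ref{eq:6.23}), continuation across $\setR$ via Sakai's piecewise analyticity, the inversion $\Psi(\zeta)=1/\psi(-\zeta^{-1})$ to handle the infinity point, and Sakai's trichotomy at the origin of $\Omega_{\rm inv}$ to force $N\le 2$. Where you genuinely diverge is in what you do with $F$ afterwards. The paper first mollifies $T$ into a smooth density $\rho$ (via \cite[Lemma 4.3]{shapiro_92}), computes $\partial_{\bar w}F=(\rho\circ\psi)\overline{\partial_w\psi}=:\mu$ in $\mathbb{H}_-$, and shows $\mathcal{C}^{\mu}$ captures all singularities of $F$, arriving at the \emph{global} representation $\psi(w)=q(w)+\mathcal{C}^{\mu^\ast}(w)$ of (\ref{eq:2.21}), with $\supp(\mu^\ast)$ compact in $\mathbb{H}_+$; the decay $\psi(t)-q(t)\to 0$ is then read off the Cauchy integral. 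You instead observe that the asymptotics are purely local at $w=\infty$: since $\supp(T)\Subset\Omega$, the preimage $K=\psi^{-1}(\supp T)$ is compactly contained in $\mathbb{H}_-$, so $F$ (hence $\psi=F^\ast$ on $\mathbb{H}_-$) is holomorphic in a full punctured neighborhood of $\infty$ with at worst a pole, and $\varphi:=\psi-q$ has a removable singularity at $\infty$ with value zero, giving $\varphi(t)\to0$ along $\setR$ directly. This buys a shorter proof that dispenses with the mollification step and with the measure $\mu$ entirely (no $\bar\partial$-computation inside $\mathbb{H}_-$ is needed), at the cost of not producing the structural formula (\ref{eq:2.21}), which the paper singles out as an essential by-product --- it is the natural generalization of the rational form (\ref{eq:6.12})--(\ref{eq:2.14}) and underlies the constructions in Section \ref{sec:example}. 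One point to state carefully in a final write-up: the decay of $\varphi$ along $\setR$ does not follow from holomorphy in $\mathbb{H}_-$ alone, but from the continuation of $\psi-q$ across the two ends of the real axis into a neighborhood of $\infty$; your argument does establish exactly this (single-valuedness being automatic since $F$ is a well-defined function on each half-plane glued by Morera, and near $\infty$ by the holomorphy of the inverted function at the origin), so no gap remains, but the dependence should be made explicit.
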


\begin{proof}
 We may assume that the distribution $T$ can be represented by  a smooth 
function $\rho$ with compact support in $\Omega$, that is,
$T(\phi)=\int \phi(z)\rho(z)dA$ (see e.g.
\cite[Lemma 4.3]{shapiro_92}). Since 
$\mathcal{C}_{\mathcal{K}}^\Omega(z)=\mathcal{C}_{\mathcal{K}}^{\rho}(z)$ for 
$z\in \setC\setminus \Omega$, 
\begin{equation*}
S(z)=\bar
z+\mathcal{C}_{\mathcal{K}}^\Omega(z)-\mathcal{C}_{\mathcal{K}}^{\rho}
(z)
\end{equation*}
is the Schwarz function of $\partial \Omega$ when $z\in\Omega$.
Similarly to the above proof,  we denote by $\psi$  the  conformal map from the
lower half-plane $\mathbb{H}_-$ onto $\Omega$ and define $F$ as in
(\ref{eq:6.23}). Obviously $F(w)$ is holomorphic in the upper half-plane
$\mathbb{H}_+$, while for $w\in \mathbb{H}_-$ we have that
\begin{equation}
\begin{split}
 \partial_{\bar w}F(w) & =\overline{\partial_{w}\psi(w)}
+\left\{
-\chi_\Omega\left(\psi(w)\right)+\rho\left(\psi(w)\right)\right\}\overline
{ \partial_{w}\psi(w)}
\\ & =\rho\left(\psi(w)\right)\overline
{ \partial_{w}\psi(w)}.
\end{split}
\end{equation} 
Hence $F$ is holomorphic 
in $\mathbb{H}_-$  apart from the preimage of ${\rm supp}(\rho)$. 
Since $S(\psi(t))=\overline{\psi(t)}=\psi^\ast(t) $ for $t\in
\setR$, $F$ has a holomorphic continuation through the real line.

We  now  set $\mu=\left(\rho\circ \psi\right)\overline{{\partial_{w}\psi}}$.
Then $\left\{\mathcal{C}^\mu(w)-
\left(\mathcal{C}^{\rho}
\left(\psi(w)\right)\right)\right\}$ is a holomorphic function in the lower
half--plane.  This means that the Cauchy transform $\mathcal{C}^\mu$ captures
the singularities of $F$.  Using inversion
as in the proof of Theorem \ref{thm:2}, we conclude that $F$ is holomorphic 
near the infinity point, and hence in 
the Riemann sphere excluding the support of $\mu$. Therfore
\begin{equation*}
 F(w)=q^\ast(w) + \mathcal{C}^\mu(w),
\end{equation*}
where $q$ is a polynomial. So by (\ref{eq:6.23}), 
\begin{equation*} 
\psi(w)=q(w)+\left(\mathcal{C}^\mu(w)\right)^\ast. 
\end{equation*} 
Applying  again  Sakai's  \cite{Sakai_91, Sakai_93} in a similar manner as we did in the previous proof, we obtain that the degree of  $q$ is at most two.

Let $K\subset\mathbb{H}_-$ denote the support of $\rho\circ\psi$, which is 
the preimage of $\supp(\rho)$. Then obviously ${\rm supp}(\mu)=K$, 
\begin{equation*}
\left(\mathcal{C}^\mu(w)\right)^\ast=\frac{1}{\pi}\overline{\int
\frac{\mu(\zeta)}{\bar w-\zeta}dA(\zeta) }=\frac{1}{\pi}\int
\frac{\overline{\mu}(\bar\zeta)}{ w-\zeta}dA(\zeta)=\mathcal{C}^{\mu^\ast}(w),
\end{equation*}
and   ${\supp}(\mu^\ast)=K^\ast\subset\mathbb{H}_+$, where $K^\ast$ is the
mirror image of $K$. 
Thus the conformal map $\psi$ is holomorphic in $\mathbb{H}_-$ and satisfies the
equation
\begin{equation}
\label{eq:2.21}
 \psi(w)=q(w)+\frac{1}{\pi}\int\frac{\mu^\ast
\left(\zeta\right)}{w-\zeta}dA(\zeta).
\end{equation} 

The boundary $\partial\Omega$ has a parametric representation 
$\{\psi(t):  t\in\setR\}$. So since $\supp(\mu^\ast)=K^\ast$  is compact,
we obtain  from (\ref{eq:2.21})  that 
\begin{equation*} 
\lim_{t\to\pm\infty}\left\{\psi(t)-q(t)\right\}=0. 
\end{equation*} 
Thus the  asymptotic curve is determined by the coefficients of  $q$  
in (\ref{eq:2.15}): A straight line if $A_2=0$; otherwise, a parabola when ${\rm Im}({A_1}/{A_2})\neq 0$ and an infinite ray when   
${\rm Im}({A_1}/{A_2})= 0$.
\end{proof}

\begin{rem}
\label{remark:1}
 Here is a simple argument showing that $\sum_{k=1}^{n+1}\gamma_k\log(w-d_k)$ 
 is single--valued outside the piecewise straight line connected the points 
$\{d_1,\ldots,d_{n+1}\}$ if and only if $\sum \gamma_k =0$.

Let $\sigma=\sum_{k=1}^{n+1}\gamma_k$. Then
\begin{equation*}
\begin{split}
 &\sum_{k=1}^{n+1}\gamma_k\log(w-d_k)\\  = & \sum_{k=1}^{n}
\gamma_k\log(w-d_k)+\left\{-(\gamma_1+\cdots+\gamma_n)+\sigma\right\}\log(w-d_{
n+1 } )\\ 
=&\gamma_1\int_{d_1}^{d_2}\dfrac{1}{w-s}
ds+(\gamma_1+\gamma_2)
\int_{d_2}^{d_3}\dfrac{1}{w-s}ds
\\& +\cdots+(\gamma_1+\cdots+\gamma_n)\int_{d_n}^{
d_{n+1}}\dfrac{1}{w-s}ds + \sigma
\log(w-d_{n+1}).
\end{split}
\end{equation*} 
Since the integrals are holomorphic  off the curve, while  $ \log(w-d_{n+1})$
is not single--valued  in any neighborhood of $d_{n+1}$, we see that 
$\sum_{k=1}^{n+1}\gamma_k\log(w-d_k)$ is 
single-valued if and only if $\sigma=0$.
\end{rem}

\section{Contact surfaces and quadrature domains}
\label{sec:contact}

We present here Strakhov's model of two dimensional contact surfaces \cite{Strakhov_74} and show its relations to unbounded QDs. 
Let $\Gamma$ be a Jordan curve  (the contact curve) in the strip $\{h_1<{\rm 
Im}(\zeta)<h_2\}$ and let  $\{\zeta(t): t\in\setR\}$ be its   parametric 
representation. We assume that  $\Gamma$ has an asymptotic line, that is, 
$\lim_{t\to\pm\infty}{\rm Im}(\zeta(t))=h$. The curve $\Gamma$  separates the 
strip into 
two layers; $D_1$ below $\Gamma$ and with a constant density $\sigma_1$, and 
$D_2$ above the curve and with a density $\sigma_2$.

Let 
\begin{equation}
U^\mu(z)=-\frac{1}{2\pi}\int \log|\zeta-z|d\mu(\zeta),
\end{equation}
be the logarithmic potential of a measure $\mu$, and whenever $\mu=\sigma\chi_D 
dA$ for some set $D$ we denote it by $U^{\sigma D}$. Note that if $D\subset 
\{h_1\leq{\rm Im}(\zeta)\leq h_2\}$, then the potential $U^{\sigma D}$ may not 
converges. However, the gravitational field in the direction perpendicular to 
the 
strip
\begin{equation*}
\partial_y U^{\sigma D}(x,y)=\frac{1}{2\pi}\int_D \sigma\frac{y-y'}{(x-x')^2+(y-y')^2}dx'dy'
\end{equation*}
always exists.  
Let $D_+$, $D_-$ be the sets in between $\Gamma$ and its asymptotic line
$\{{\rm Im}(\zeta)=h\}$, that is, $D_+=D_2\cap\{ h<{\rm Im}(\zeta)\}$ and  
 $D_-=D_1\cap\{ h>{\rm Im}(\zeta)\}$. Note that  whenever  $D$ is a strip 
$\{a\leq{\rm Im}(\zeta)\leq b\}$, then  $\partial_y U^{\sigma D}$ is a constant for ${\rm Im}(z)>b$ (see e.g. \cite{karp_margulis_96}).  Using this property we find that 
\begin{equation}
\partial_y U^{\sigma_2D_2}(z)+\partial_y U^{\sigma_1D_1}(z)=\text{const.}-\partial_y U^{\sigma D_+}(z) - \partial_y U^{-\sigma D_-}(z),
\end{equation}
when ${\rm Im}(z)>h_2$ and where $\sigma=\sigma_2-\sigma_1$. If in addition, 
$|{\rm Im}(\zeta(t))-h|=O(|t|^{-\alpha})$ for large $|t|$ and $\alpha>0$, then the 
gravitational fields $\partial_xU^{\sigma D_\pm}$ converge, and hence  
\begin{equation}
\begin{split}
2\left(\partial_xU^{\sigma D_+}(z)-i\partial_yU^{\sigma D_+}(z)
+\partial_xU^{-\sigma D_-}(z)-i\partial_yU^{-\sigma D_-}(z)\right)\\ =\frac{1}{\pi}\int_{D_+}\frac{\sigma}{\zeta-z}dA+\frac{1}{\pi}\int_{D_-}\frac{-\sigma}{\zeta-z}dA= \C^{\sigma D_+}(z)+\C^{-\sigma D_-}(z).
\end{split}
\end{equation}
Applying Green's theorem to the function $\sigma\frac{\bar{\zeta}-\zeta+2ih}{\zeta-z}$ in each component of $D_+$ and $D_-$, and noting that this function vanishes on the asymptotic line $\{{\rm Im}(\zeta)=h\}$, we get that 
\begin{equation}
\label{eq:cauchy:2}
F(z):=\frac{1}{2\pi i}\int_\Gamma \sigma\frac{\bar{\zeta}-\zeta+2ih}{\zeta-z}d\zeta =\C^{\sigma D_+}(z)+\C^{-\sigma D_-}(z), \qquad {\rm Im}(z)>h_2.
\end{equation} 
Thus up to a constant term, the Cauchy integral
determines the gravitational fields.

We shall see that the Schwarz function of $\Gamma$ governs   the complex gravitational field $F(z)$ in (\ref{eq:cauchy:2}).
Let $\Omega$ be the domain below $\Gamma$ and let $S(\zeta)$ be the Schwarz 
function of $\Gamma$ and assume that the singularities of the Schwarz function 
in $\Omega$ have compact support. Then
\begin{equation}
\label{eq:3.11}
\int_{\Omega\cap \{|\zeta|=R\}}\sigma\frac{S(\zeta)-\zeta+2ih}{\zeta-z}d\zeta+
\int_{\Gamma\cap B_R}\sigma\frac{S(\zeta)-\zeta+2ih}{\zeta-z}d\zeta
=\sigma\int_\gamma\frac{S(\zeta)}{\zeta-z}d\zeta,
\end{equation}
where $\gamma$ is a closed curve around the singularities of the Schwarz function in $\Omega$ and $R$ sufficiently large positive number.  Assume further that $\Omega$ is the image of a conformal mapping $\psi(w)=w +ih +\varphi(w)$ in the lower half-plane, then 
\begin{equation*}
S(\zeta)=\psi^{-1}(\zeta)-ih+\varphi^\ast\left(\psi^{-1}(\zeta)\right)
\end{equation*}
is the Schwarz function of $\Gamma$. Therefore, if $\varphi$ is given by 
(\ref{eq:2.14}), then $S(\zeta)-\zeta+2ih=o(|\zeta|)$, as $|\zeta|\to \infty$, 
and this implies that the first integral of the left hand side of 
(\ref{eq:3.11}) tends to zero as $R\to \infty$. So we conclude that 
\begin{equation*}
F(z)=\frac{1}{2\pi i}\int_\Gamma \sigma\frac{{\bar\zeta}-\zeta+2ih}{\zeta-z}d\zeta=\frac{1}{2\pi i}\int_\Gamma \sigma\frac{S({\zeta})-\zeta+2ih}{\zeta-z}d\zeta =\frac{\sigma}{2\pi i}\int_\gamma\frac{S(\zeta)}{\zeta-z}d\zeta.
\end{equation*} 
On the other hand, by (\ref{eq:2.7}),
\begin{equation*}
\int_\Omega f dA=\frac{1}{2i}\int_\Gamma S(z)f(z)dz=\frac{1}{2i}\int_\gamma\frac{S(\zeta)}{\zeta-z}d\zeta.
\end{equation*}
Thus the singularities of the Schwarz function control the complex potential $F(z)$ as well as the distribution $T$ of the quadrature identity (\ref{eq:q-i}). The Cauchy integral (\ref{eq:cauchy:2}) of the curve $\Gamma$ is a rational function if and only if the domain below it is a QD of a combination Dirac measures and their derivatives.

\section{Examples of non--uniqueness}
\label{sec:example}

In this section we use the specific form of the conformal map (\ref{eq:6.12}) 
and present examples of a continuous family of domains such that each one of them is a QD of the same measure. In Examples \ref{ex:1} and \ref{ex2} the families 
will converge to an union of a disk and a null QD. 

The idea of these constructions is due to Strakhov \cite{Strakhov_74_2}, where 
he used the conformal mapping  (\ref{eq:6.12}) with $q$ a linear polynomial (see also \cite{fedorova_tsirulskiy}). In a particulate case where the perturbation $\varphi$ in (\ref{eq:6.12}) has a single simple pole, 
Strakhov  computed the boundary explicitly and showed that it is  a third order algebraic curve. For the convenience of the reader we present
 his example here.

\begin{exmp}[Strakhov ]
\label{ex:1}

We consider a conformal map of the form (\ref{eq:6.12}), 
\begin{equation*}
\label{eq:3.1}
 z=\psi(w) = w +ih +\frac{a}{w-ib}, \quad a, b>0,\  h\in\setR.
\end{equation*} 
The parameters $a$, $b$ and $h$ are chosen  so that $\psi(-ib)=0$, and that the
residue  of the Schwarz function at zero is  one. From equations (\ref{eq:2.7}) and (\ref{eq:2.9}), and the above expression for $\psi$ we have that 
\begin{equation}
\label{eq:3.2b}
 \left\{\begin{array}{l}
h=b-\frac{a}{2b}\\  a+\frac{a^2}{4b^2}=1        
   \end{array}\right..
\end{equation}
Since there are three parameters, one of them is free. Thus for an appropriate  
choice of the parameters $a, b$ and $h$,  there is a one parameter family of 
conformal mappings, which we will denote by  $\psi_b$, and such that $\Omega_{b}:=\psi_b(\mathbb{H}_-)$ becomes  
a QD for $\pi\delta_0$, where $\delta_0$ is the Dirac measure at $0$.

The boundary of $\Omega_{b}$ is given by 
\begin{equation*}
 \Gamma_{b}=\left\{z(t)=t+ \frac{at}{t^2+b^2} + i\left(h+
\frac{ab}{t^2+b^2}\right): t\in\setR\right\}.     
\end{equation*}
Using polar coordinates one may show that $\Gamma_{b}$ has the implicit
representation
\begin{equation}
\label{eq:3.2}
\{(y+r-\alpha)\left(x^2+(y+r)^2\right)=2r(y+r)^2,\quad \alpha,r>0\},
\end{equation} 
where $\alpha=b$, $r=\frac{a}{2b}$ and $h=\alpha-r$. This   is a
part of a larger family of third order curves that  are  called {\it Conchoids
of de Sluze}. From (\ref{eq:3.2b}) we see that 
$\alpha=\frac{1}{2}(\frac{1}{r}-r)$. Thus
 $\alpha\to 0$ when $r\to 1$, and from the implicit representation 
(\ref{eq:3.2}) it is clear that $\Gamma_{b}$ converges to the union of the
circle $\{x^2+y^2=1\}$  and the line $\{y=-1\}$. Therefore, as $(a,b)\to(0,0)$
and (\ref{eq:3.1}) is 
fulfilled, then $\Omega_{b}$ converges to the union of the unit disk and the
null QD  $\{y<-1\}$.

\begin{figure}[h]
\centering
\includegraphics[width=0.7\linewidth]{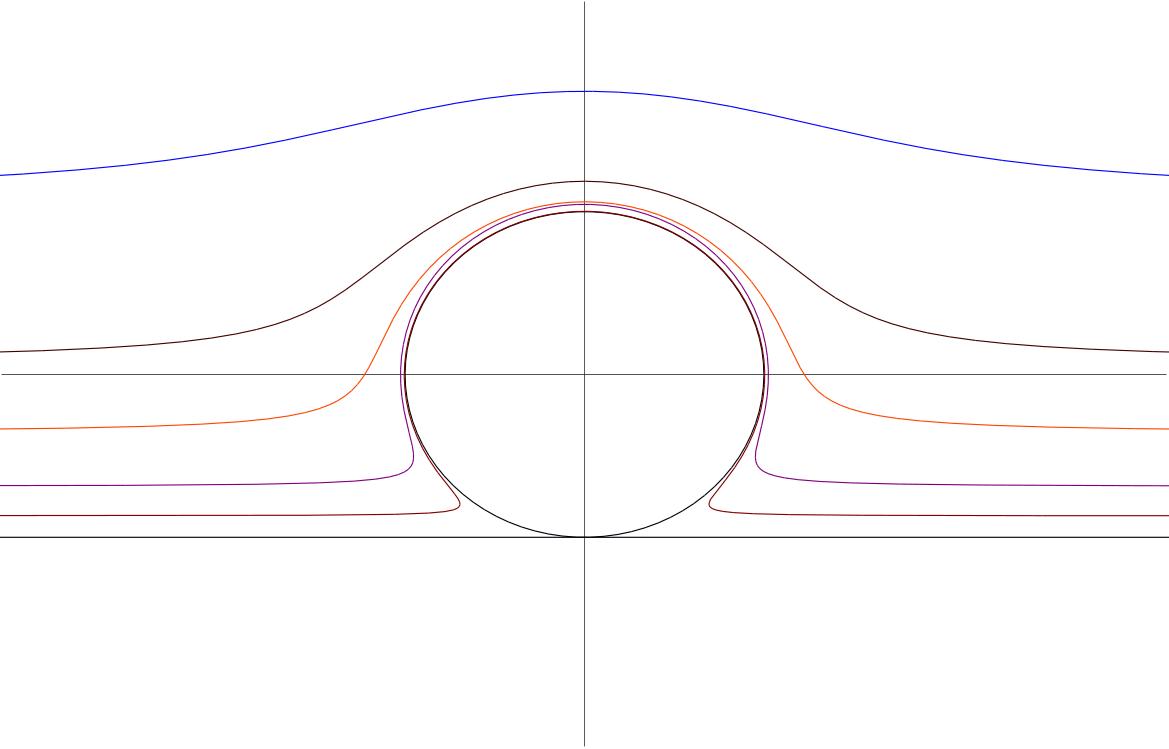}
\caption{A family of Conchoids of de Sluze}
\label{fig:conchoid6}
\end{figure}

\end{exmp}

\begin{rem}
In a similar manner we can fix the nodes and the coefficients of the quadrature
identity and construct  a rational conformal mapping of the form (\ref{eq:6.12})
--(\ref{eq:2.14}). This will result in a system of algebraic equations that has 
one  free parameter, and hence provides  examples of families of domains such that each one of them is a QD of the same distribution. 
\end{rem}

\begin{exmp}
\label{ex2}
Following Strakhov's example we   construct a family of 
QDs where a parabola is the  asymptote of the boundary. We consider a map 
\begin{equation}
 z=\psi(w)= 2w+iw^2+ih+\frac{a}{w-ib}, \quad w\in \mathbb{H}_-,\ a,b>0,\  h\in\setR. 
\end{equation} 

Assuming for a moment that the map $\psi$ is univalent, and requiring that    
$\psi(-ib)=0$ and that the Schwarz function has 
residue one at the origin, then we get the following two equations
\begin{equation}
\label{eq:3.6b}
 \left\{\begin{array}{l}
  h=2b+b^2-\frac{a}{2b}\\   2a+2ab +\frac{a^2}{4b^2}=1
  \end{array}\right..
\end{equation} 
Since the algebraic equations (\ref{eq:3.6b}) has one parameter free, there is a family of conformal mappings $\psi_a$ such that $\Omega_{a}=\psi_a(\mathbb{H}_-)$ is a QD of the unit Dirac measure. 
Our aim is to let $a,b$ tend to zero in a such manner that 
$\Omega_{a}$ will converge. 
From the second equation of (\ref{eq:3.6b}) we see that the condition 
\begin{equation}
\label{eq:3.6}
 \lim_{a,b\to0}\frac{a}{b}=2
\end{equation} 
is demanded. 

In order to assure that the map $\psi_a$ is univalent, we will show that for an
appropriate  choice of small parameters $a$ and $b$ it 
maps the real line onto a curve without closed loops.  To see this we let
$z(t)=X(t)+iY(t)$, where
\begin{equation}
\label{eq:3.4}
 X(t)= 2t + \frac{at}{t^2+b^2}, \qquad Y(t)=t^2+h+\frac{ab}{t^2+b^2}, 
\end{equation} 
 be the parametric presentation  of the boundary of $\Omega_a$, 
and we shall and analyze the critical points of these functions.  
Furthermore, since   $X$ is an odd function and  $Y$ 
is an even, it suffices to examine the critical points only for 
positive $t$. 
Computing the derivatives
\begin{equation}
\label{eq:3.5}
 \frac{dX}{dt}(t)=\frac{2(t^2+b^2)^2+a(b^2-t^2)}{(t^2+b^2)^2}, \qquad 
 \frac{dY}{dt}(t)=2t-\frac{2abt}{(t^2+b^2)^2},
\end{equation} 
we find that the function $Y$ has critical points when
$t^2=b\sqrt{\frac{a}{b}}-b^2$ and
$b\sqrt{\frac{a}{b}}-b^2\geq0$. 
 The critical points of $X$ satisfy the equation
$t^2=\frac{1}{4}\left(\pm\sqrt{a^2-16ab^2}+a-4b^2\right)$.  This equation has   two positive roots when $a$ and $b$ are small. Now,  the contour $z(t)=X(t)+iY(t)$ will have a closed loop for positive $t$ if
and only if the critical point of  $Y$ is in between the two critical points of 
$X$. The largest root is approximately $\frac{1}{4}\left(2a-12b^2\right)$ , and  
since by (\ref{eq:3.5})  $a\simeq 2b$,   it is less than $b\sqrt{\frac{a}{b}}-b^2 
$ for small $b$. So we conclude that the curve $z(t)$ has no closed loops when 
$a$ and $b$ are sufficiently small.

Having showed that $\psi_a$ is univalent, we turn now to compute the limit of the boundary of $\Omega_{a}$, $\Gamma_{a}=\psi_a(\setR)$, as $a$ and $b$ tend to 
zero. 
However, unlike Example \ref{ex:1}, we do not know the explicit representation 
of the boundary. Therefore we make the variable change
\begin{equation*}
 \cos\theta=\frac{1}{\sqrt{(t/b)^2+1}}, 
\qquad \sin\theta=\frac{t/b}{\sqrt{(t/b)^2+1}}, \quad 
-\frac{\pi}{2}<\theta<\frac{\pi}{2}.
\end{equation*}
 Then (\ref{eq:3.4}) becomes
\begin{equation}
\label{eq:3.7}
 X(\theta)= 2b\tan\theta + \frac{a}{b}\sin\theta\cos\theta, \qquad 
Y(\theta)=b^2\tan^2\theta+h + \frac{a}{b}\cos^2\theta.
\end{equation} 
By (\ref{eq:3.6b}) and (\ref{eq:3.6}),
 $\lim_{a,b\to0} h=-1$. Therefore, for any
$\epsilon>0$ and $\theta\in [-\frac{\pi}{2}+\epsilon,\frac{\pi}{2}-\epsilon]$, 
the curve (\ref{eq:3.7}) tends to 
\begin{equation*}
 X(\theta)= 2\sin\theta\cos\theta = \sin 2\theta, \qquad 
Y(\theta)=2\cos^2\theta-1=\cos 2\theta.
\end{equation*} 
On the other hand, we see that for $t^2\geq b$ 
\begin{equation*} 
|X(t)-2t|=\frac{a|t|}{t^2+b^2}=\frac{a}{|t|}\frac{1}{1+\left(\frac{b}{t}
\right)^2}\leq \left(\frac{a}{b}\right)\sqrt{b}\to 0
\end{equation*}
and 
\begin{equation*}
 |Y(t)-(t^2-1)|\leq |h+1|+\frac{a}{1+b}\to 0,
\end{equation*}
as $(a,b)\to (0,0)$ and equations (\ref{eq:3.6b}) hold. 
Thus the family of curves $\Gamma_{a}$ tends  to a union of the unit circle and the parabola $y+1=(x/2)^2$. The family of simply connected QDs converges to 
\begin{math} 
B_1(0)\cup\left\{y<\left({x}/{2}\right)^2-1\right\},
\end{math}
that is, a union of null QD and a disk. 

\begin{figure}[h]
\centering
\includegraphics[width=0.4\linewidth, height=0.2\textheight]{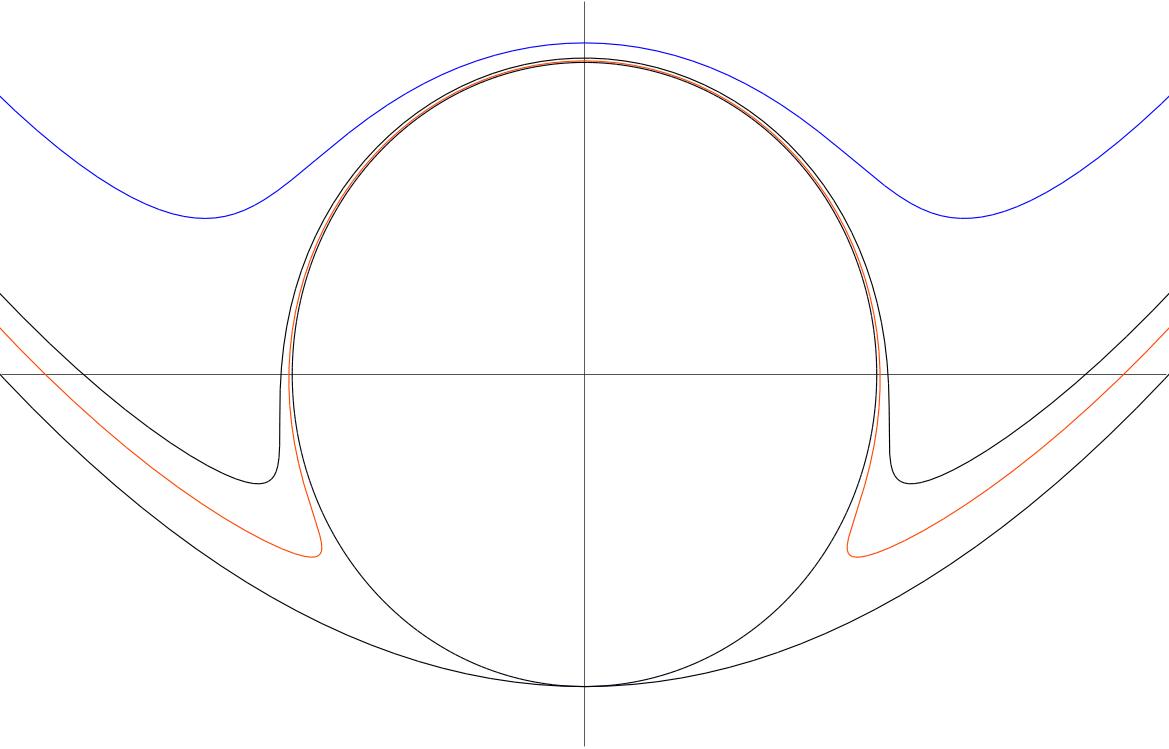}
\caption{A family converging to a parabola and a circle}
\label{fig:parabola4}
\end{figure}

\end{exmp}

The existence of a QD for a positive measure and  with  an infinite ray as an 
asymptote of the boundary is not evident. It cannot be constructed by Sakai's 
variational method \cite[Ch. 11]{Sakai_82}, since this method requires that the 
complement of the attached null QD has  non--empty interior.  Nevertheless, 
 by using similar ideas as in examples \ref{ex:1} and \ref{ex2} we are able to  
construct a family of QDs for a positive  Dirac measure at zero and with the 
positive $x$--axis as the asymptote of the boundary. In  contrast with the 
previous two examples, the boundary of the family cannot converge to a union of 
a circle and a ray, since this contradicts the regularity of the Schwarz 
function \cite{Sakai_91}.

\begin{exmp}
We consider a conformal map 
 \begin{equation}
\label{eq:3.8}
  z=\psi(w)=w^2+h-\frac{ia}{w-ib}, \quad b>0, \ a,h\in\setR
 \end{equation}
from $\mathbb{H}_-$ to the domain $\Omega$. 
Then the requirements that $\psi(-ib)=0$, the Schwarz function has  residue
one at $z=0$, and the origin belongs to the image of $\psi$, leads to the 
following relations:
\begin{subequations}
 \label{eq:3.9}
\begin{align}
   \label{eq:3.9a}
  & h =b^2-\frac{a}{2b}& \\
    \label{eq:3.9b} &8b^3a-4b^2+a^2=0&\\
&h+\frac{a}{b}=b^2+\frac{a}{2b}>0.&
\end{align}
\end{subequations}

For $a<0$ relations (\ref{eq:3.9}) cannot hold.
When  $a>0$, we find by computing the minimum of the third degree
polynomial in (\ref{eq:3.9b})  that it has positive roots only when
$a\leq\sqrt[4]{4/27}$. For that range of $a$ there are two types of QDs. 

For both types we need  to check that the map (\ref{eq:3.8}) is
conformal. We do this by checking the  conditions which  guarantee that  the
real
line is mapped in a one to one manner onto the curve
\begin{equation}
\label{eq:3.10}
 X(t)= t^2+h + \frac{ab}{t^2+b^2}, \qquad Y(t)=\frac{-at}{t^2+b^2}, \ t\in\setR.
\end{equation} 

The first type is when the function $X$ has critical points for $t\neq0$, this occurs  when $t^2=\sqrt{ab}-b^2$, which implies that $b^3<a$. 
Then the curve (\ref{eq:3.10}) will not have
a closed loop if  the critical points of $Y$ appear ``after'' the critical
points of $X$, which means that $b^2\geq \sqrt{ab}-b^2$. Thus in that case we 
have to
require that  the largest root of (\ref{eq:3.9b}) will 
satisfy the condition
$b^3<a<4b^3$. 

The second type is where  the function $X$ is monotone for $t\gtrless0$.
Also, in that
case  that the largest root of (\ref{eq:3.9b}) needs to satisfy the 
condition $b^3>a$.  Note that if $a<1/3$, then the largest root is greater than
one and hence this  condition is satisfied.  

\begin{figure}[h]
\centering
\includegraphics[width=0.5\linewidth, height=0.3\textheight]{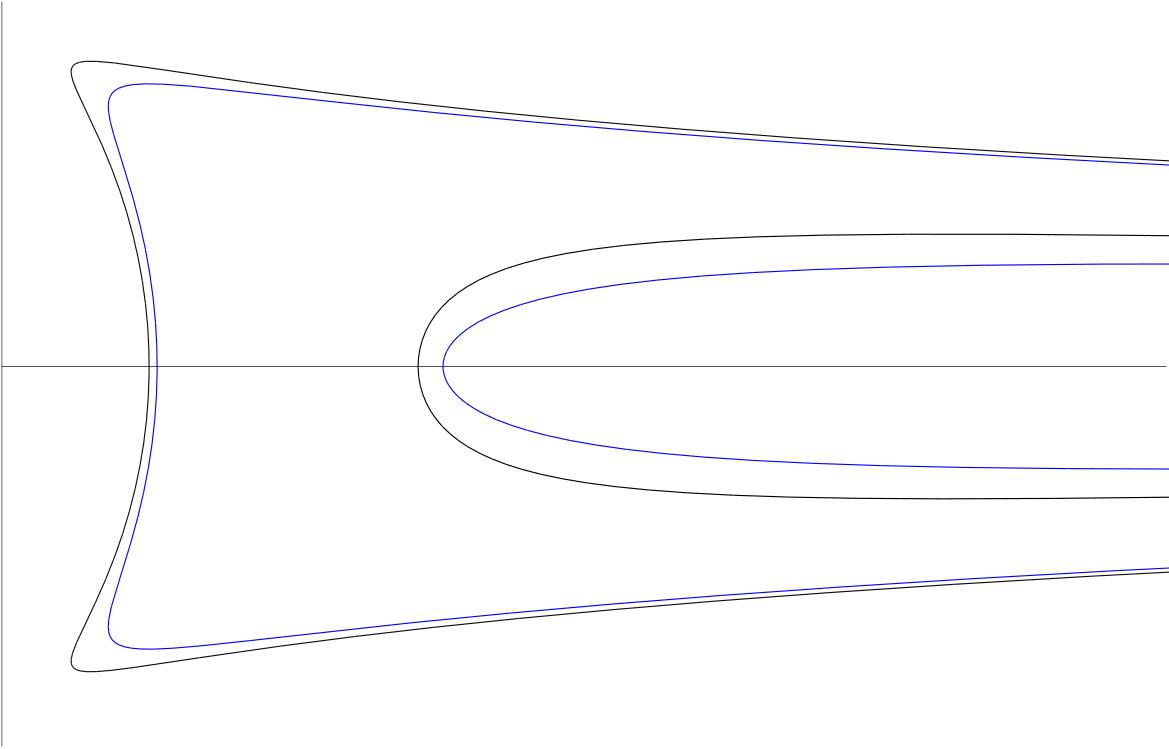}
\caption{Two types of QDs  for the Dirac measure and a ray as an asymptote}
\label{fig:ray5}
\end{figure}


\end{exmp}

\section{Summary}
By means of conformal mappings we established the 
asymptotic behavior of the boundary of unbounded quadrature domains in the plane 
and when the infinity point belongs to the boundary. Although this tool  is 
not available in higher dimensions, we hope the present paper  will stimulate 
 further investigations of unbounded quadrature domains in the space. The 
specific form of the conformal mapping from the lower half plane enables 
the construction of families of quadrature domains of the Dirac measure at a
given point and possessing a given type of the asymptote.     


\vskip 10mm
\noindent
\textbf{Acknowledgement.}
I would like to thanks Avmir Margulis for many valuable talks and enlightening 
comments. I also grateful  to the anonymous  referee for his/her constructive 
comments, which definitely contributed  to the  improvement of the manuscript.

\bibliographystyle{amsplain}
\bibliography{bibgraf-quad}

\end{document}